\newcommand     {\R}            {{\mathbb R}}
\newcommand     {\C}            {{\mathbb C}}
\newcommand     {\SL}[1][\Z]    {\sym{SL}(2,{#1})}
\newcommand     {\Z}            {{\mathbb Z}}
\newcommand     {\sym}[1]       {\operatorname{#1}}
\newcommand     {\abs}[1]       {{\left\lvert{#1}\right\rvert}}
\newcommand     {\kro}[2]       {{\left(\frac{#1}{#2}\right)}}
\newcommand{\GL}[2]{GL(\ensuremath{#1},\ensuremath{#2})}
\newcommand{\Sp}[2]{Sp(\ensuremath{#1},\ensuremath{#2})}
\newcommand{\GSp}[2]{GSp^+(\ensuremath{#1},\ensuremath{#2})}
\renewcommand{\H}{\mathcal{H}}
\newcommand{\para}[1]{\Gamma^{\text{para}}[#1]}
\DeclareMathOperator{\disc}{\text{disc}}
\title{%
  A B\"ocherer-Type Conjecture for Paramodular Forms
}
\author{%
Nathan C. Ryan, Gonzalo Tornar\'ia
\thanks{This project was supported by the National Science Foundation
under FRG Grant No. DMS-0757627 and the Fulbright Commission in
Uruguay. The computations were carried out in a computer system funded
by PEDECIBA.}
}
\date{%
  \today}
\address{
{\parskip 0pt
Department of Mathematics, Bucknell University\endgraf
nathan.ryanATbucknell.edu\endgraf
Centro de Matem\'atica, Universidad de la Rep\'ublica\endgraf
tornariaATcmat.edu.uy\endgraf
}
}
\begin{document}

\maketitle

\begin{abstract}
In the 1980s Böcherer formulated a conjecture relating the central value of the quadratic twists of the spinor $L$-function attached to a Siegel modular form $F$ to the coefficients of $F$. He proved the conjecture when $F$ is a Saito-Kurokawa lift.  Later Kohnen and Kuss gave numerical evidence for the conjecture in the case when $F$ is a rational eigenform that is not a Saito-Kurokawa lift.  In this paper we develop a conjecture relating the central value of the quadratic twists of the spinor $L$-function attached to a paramodular form and the coefficients of the form.  We prove the conjecture in the case when the form is a Gritsenko lift and provide numerical evidence when it is not a lift.
\end{abstract}

\section{Introduction}\label{sec:intro}

Conjectures about central values of $L$-functions abound; for example the Conjecture of Birch and Swinnerton-Dyer predicts that the order of vanishing of the central critical values of an elliptic curve $L$-functions is equal to the rank of the elliptic curve's Mordell-Weil group.  Finding an asymptotic formula fo $V_E(x)$, the number of quadratic twists by discriminant $d\leq x$ of an elliptic curve $E$ with vanishing central value, might provide some insight into how often the twists of $E$ have rank at least 2.  

Sarnak has predicted the asymptotic size of this count based on
formulas of Waldspurger \cite{Waldspurger} and Kohnen-Zagier \cite{KohnenZagier}
which relate the central value
of the quadratic twists of an elliptic curve $L$-function to squares
of Fourier coefficients of half-integral weight modular forms.
These formulas have been used (e.g., in \cite{CKRS}) to give more
refined conjectures as to the asymptotic size of $V_E(x)$.  They can
also be used to compute a large number of central critical values of
twisted $L$-functions as, generally, computing coefficients of a form
is easier than computing the central critical values of an
$L$-function. Moreover, since the Fourier coefficients are integral,
the computation allows one to determine exactly whether or not there is a vanishing as there will be no error introduced by rounding.


In the 1980s and 1990s similar formulas were conjectured for Siegel
modular forms.  In \cite{Bocherer} a conjecture was formulated
relating central critical values of quadratic twists of spinor
$L$-functions associated to a Siegel modular form $F$ of full level
and sums of coefficients of $F$.  The conjecture when $F$ is a Maass
lift was proved in \cite{Bocherer}. Numerical evidence for the
conjecture when $F$ is not a Maass lift was provided in
\cite{KohnenKuss} for rational eigenforms of level 1 and weight up to
26, and only recently for higher weights and non-rational eigenforms in
\cite{Raum}.  For modular forms of (squarefree) level $N>1$ the
conjecture has been proved in \cite{BochererSchulzePillot} in the case
when the form is a Yoshida lift.

In what follows we investigate a version of Böcherer's Conjecture in
the setting of paramodular forms.  Paramodular forms are similar to Siegel
modular forms on $\Sp{4}{\Z}$ in that they are multivariate modular
forms on a group of rank 4.  Recently they have been studied in the
context of the Paramodular Conjecture which identifies the spinor
$L$-functions of certain paramodular forms with the Hasse-Weil
$L$-functions of certain abelian surfaces \cite{PoorYuen}, \cite{BrumerKramer}.  

We give evidence for our generalization of Böcherer's Conjecture in
two ways: first, we prove the conjecture when the form is a Gritsenko
lift and, second, we verify the conjecture computationally in a number
of cases.  For a fundamental discriminant $D<0$ coprime to the level,
our conjecture takes the form
\[
L(F,1/2,\chi_D) = C_F \abs{D}^{1-k} A(D)^2
\]
where $F$ is a paramodular form, $C_F>0$ is a constant that only
depends on $F$, and $A(D)$ is an
average of the coefficients of $F$.  It turns out that computing the
Fourier coefficients of such an $F$ is computationally very expensive: we use
data from \cite{PoorYuen} to compute the right-hand side of the
formula.

For the left-hand side, however, the data of \cite{PoorYuen} yield at most the Euler factors at the primes 2, 3, 5, 7 and, in some cases, 11.  In
particular, in all cases the first $L$-series coefficient that is unknown is less than 17.  In
order to compute central critical values to any reasonable precision,
we do not have enough coefficients.
Instead, we assume the Paramodular Conjecture and compute
central critical values $L(F,1/2,\chi_D)$ for many $D$ by computing
the central critical value of the corresponding Hasse-Weil $L$-functions and showing
that, numerically, $L(F,1/2,\chi_D) / (C_F \abs{D}^{1-k})$ is the square
of $A(D)$ for the $D$ for which we have data. The constant $C_F$ is
computed from the formula applied to the smallest possible $\abs{D}$.

The paper is organized as follows.  In the rest of this section we
define all the terminology and notation needed to understand the
conjecture.  In the following section we prove the conjecture for
paramodular forms that are lifts. In the third section we describe our
experimental results.  The fourth section deals with the
case when the form $F$ is in the minus space with respect to the
Atkin-Lehner operator---the conjecture holds in the case but some
interesting computational phenomena arise.  We conclude with tables reporting the data we generated.

\subsection{Notation}  
The main objects of study in this paper are paramodular forms of prime
level $p$ and their $L$-functions.
\par
Let $R$ be a commutative ring with identity. The symplectic group
is $\Sp{4}{R}:=\{x\in \GL{4}{R}:  x' J_2 x = J_2\}$,
where the transpose of matrix $x$ is denoted $x'$ and for the $n
\times n$ identity matrix $I_n$ we set
$J_n = \left(\begin{smallmatrix}
0 & I_n\\-I_n&0
\end{smallmatrix}\right)$.
When $R\subset \R$, the group of symplectic similitudes
is $\GSp{4}{R} := \{x\in\GL{4}{R}: \exists \mu\in\R_{>0}: x' J_2 x =
\mu J_2\}$.

The paramodular group of level $p$ is
\begin{equation*}\label{eq:para}
\para{p} := \Sp{4}{\mathbb{Q}}\cap 
\begin{pmatrix} 
* &* & */p &*\\
p* & * &*& *\\
p*& p*& * & p*\\
p* & * & * & *
\end{pmatrix},
\text{ where $*\in\Z$.}
\end{equation*}

\subsection{Modular Form Notation}  Let $\H_n:=\{ Z = X+iY\in
M_{n\times n}(\C): Z' = Z, Y>0\}$ be the Siegel upper half space.  The
group $\GSp{4}{\R}$ acts on $\H_2$ by $\gamma\langle Z \rangle =
(AZ+B)(CZ+D)^{-1}$ where $\gamma=\left(\begin{smallmatrix} A&B\\C &
D\end{smallmatrix}\right)$.  The complex vector space of paramodular
forms of degree 2, level $p$ and weight $k$ is the set of holomorphic
$F:\H_2\to\C$ such that \[
(F|_k\gamma)(Z):=\det(CZ+D)^{-k}F(\gamma\langle Z\rangle) =   F(Z)
\]
for all $\gamma\in \para{p}$ and such that for all positive definite
$Y_0$ and for all $\gamma\in\Sp{4}{\Z}$, $F|\gamma$ is bounded on
$\{Z\in\H_2: \Im{Z}>Y_0\}$.  We denote the space of paramodular forms
by $M^k(\para{p})$.  For $F\in M^k(\para{p})$ we define the Siegel
$\Phi$-operator as $\Phi(F)(Z):=\lim_{\lambda\to\infty}
F\left(\left(\begin{smallmatrix} i\lambda &
0\\0&Z\end{smallmatrix}\right)\right)$
and the space of cusp forms
$S^k(\para{p})$ as the space of all paramodular forms so that $F\mid \gamma\in\ker \Phi$ for all cusps $\gamma$.

By the K\"{o}cher principle, every $F\in M^k(\para{p})$ has a Fourier expansion of the form
\[
F(Z) = \sum_{T\in {}^p\mathcal{X}_2} a(T;F) q^{pm}\zeta^rq'^{n}
\]
where $q := e^{2\pi i z}, q':=e^{2\pi i z'}~ (z,z'\in \H_1)~ \zeta := e^{2 \pi i \tau} ~(\tau\in\C)$
and
\[
{}^p\mathcal{X}_2 := \left\{\left(\begin{smallmatrix} pa&b/2\\b/2&c\end{smallmatrix}\right)
\geq 0
\;:\;
a,b,c\in\Z
\right\}.
\]
For $F\in S^k(\para{p})$, we have $a(T[U];f)=\det(U)^ka(T;f)$ for
every $U\in\hat{\Gamma}_0(p)$ where $\hat{\Gamma}_0(p) := \langle
\Gamma_0(p),\left(\begin{smallmatrix} 1&0
\\0&-1\end{smallmatrix}\right)\rangle$
(here $\Gamma_0(p)$ is the
congruence subgroup of $\SL$ with lower lefthand entry congruent to 0
mod $p$.)  Moreover, cusp forms are supported on the positive definite
matrices in ${}^p\mathcal{X}_2$.

The space $S^k(\para{p})$ can be split into a plus space and a minus space.  Define an operator
\[
\mu_p = \frac{1}{\sqrt{p}}
\begin{pmatrix}
0 & 1 & 0 & 0  \\
-p & 0 & 0 & 0 \\
0 & 0 & 0 & p \\
0 & 0 & -1 & 0
\end{pmatrix},
\]
an involution.
Then, we define $S^k(\para{p})^\pm =\{f\in S^k(\para{p}): f\mid \mu = \pm f\}$.  

\subsection{$L$-function Notation}


Following Andrianov\cite{Andrianov}, one can define operators $T(n)$
in terms of double cosets.  Specifically, using \cite{Ibukiyama}, one
can define the action of the operator $T(q^\delta)$ for
$S^k(\para{p})$ for primes $(p,q)=1$.


Suppose we are given a paramodular form $F\in S^k(\para{p})$ so that
for all $n\in\Z$, $F|T(n) = \lambda_{F,n}F=\lambda_n F$.  Then we can
define the spinor $L$-series by the Euler product
\begin{equation}\label{eq:spinor}
L(F,s) := \prod_{\text{$q$ prime}} L_q\bigl(q^{-s-k+3/2})^{-1},
\end{equation}
where the local Euler factors are given by
\[
L_q(X) := 1 - \lambda_q X + (\lambda_q^2-\lambda_{q^2}-q^{2k-4}) X^2
            - \lambda_q q^{2k-3} X^3 + q^{4k-6} X^4
\]
for $q\neq p$, and $L_p(X)$ has a similar formula but of degree 3 (this will be investigated further by the first author in future work).


As our computations are in weight 2, we will compute the $L$-series of a paramodular form of prime level by assuming the following conjecture.
\begin{conjecture}[Paramodular Conjecture]\label{conj:paramodular}
Let $p$ be a prime.  There is a bijection between lines of Hecke
eigenforms $F\in S^2(\para{p})$ that have rational eigenvalues and are
not Gritsenko lifts and isogeny classes of rational abelian surfaces
$\mathcal{A}$ of conductor $p$.  In this correspondence we have that
\[
L(\mathcal{A},s,\text{Hasse-Weil})=L(F,s).
\]
\end{conjecture}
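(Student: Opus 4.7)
The plan is to approach this as an instance of the Langlands correspondence for $\GSp{4}{\Q}$, using compatible systems of $\ell$-adic Galois representations as the bridge between the two sides. To each eigenform $F\in S^2(\para{p})$ with rational Hecke eigenvalues one expects to attach a compatible system $\rho_{F,\ell}\colon G_\Q\to\GSp{4}{\overline\Q_\ell}$ whose characteristic polynomial of Frobenius at a prime $q\neq p,\ell$ agrees with the local Euler factor $L_q(X)$ defined in \eqref{eq:spinor}; conversely, to an abelian surface $\mathcal{A}/\Q$ of conductor $p$ the $\ell$-adic Tate module furnishes exactly such a compatible system. The conjecture then reduces to showing that these two sources of Galois representations coincide, with the excluded class of Gritsenko lifts corresponding on the geometric side to abelian surfaces that are either isogenous to a product of elliptic curves or arise as the Weil restriction of an elliptic curve over a quadratic field, i.e., precisely the surfaces whose associated Galois representations are reducible or induced from a proper subgroup.

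For the direction $\mathcal{A}\mapsto F$, the strategy is to establish automorphy of $\rho_{\mathcal{A},\ell}$ by applying an automorphy lifting theorem for $\GSp{4}{\Q}$ to the residual representation $\overline{\rho}_{\mathcal{A},\ell}$ modulo a carefully chosen small prime $\ell$. Residual automorphy would be obtained either by reduction to the $\mathrm{GL}_2$ setting via an auxiliary congruence (for instance to a symmetric cube or a Yoshida lift), or via potential automorphy over a totally real extension followed by a descent argument. To land specifically in paramodular level $p$, rather than in some coarser Iwahori-type structure, one invokes the Roberts--Schmidt theory of local newforms for $\GSp{4}{\Q_p}$, which characterises the paramodular fixed vector as the distinguished vector in generic representations of conductor $p$.

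For the converse direction $F\mapsto\mathcal{A}$, which is the $\mathrm{GSp}_4$-analogue of the Eichler--Shimura construction, the weight $2$ coherent cohomology of a Siegel threefold does not directly cut out an abelian surface. One must therefore produce $\rho_{F,\ell}$ via congruences to forms of regular weight whose Galois representations do appear in \'etale cohomology, and then invoke known cases of the Fontaine--Mazur conjecture to realise the resulting symplectic Galois representation (with Hodge--Tate weights $(0,0,1,1)$ and rational Frobenius traces) as the Tate module of an abelian surface $\mathcal{A}/\Q$ of conductor $p$. Rationality of the Hecke eigenvalues of $F$ then forces $\mathcal{A}$ to be defined over $\Q$ up to isogeny.

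The hard part is that the Paramodular Conjecture, in the form stated by Brumer--Kramer \cite{BrumerKramer}, is a major open problem at the frontier of the Langlands program. Substantial progress has been made by Boxer--Calegari--Gee--Pilloni on potential modularity of abelian surfaces over totally real fields, and explicit verifications have been carried out at many small prime conductors, but the full bijection for every prime $p$ lies well beyond current technology. The authors therefore adopt this conjecture as a computational hypothesis rather than attempting to prove it, and use it to extract the spinor $L$-values $L(F,1/2,\chi_D)$ from the Hasse--Weil $L$-functions of candidate abelian surfaces $\mathcal{A}$.
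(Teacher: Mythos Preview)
The statement you were asked about is labeled \emph{Conjecture} in the paper, and the paper offers no proof of it; it is explicitly assumed as a hypothesis in order to compute $L$-values (see the sentence immediately preceding Conjecture~\ref{conj:paramodular} and all of Section~\ref{sec:nonlifts}). So there is no ``paper's own proof'' to compare against.

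Your writeup is not a proof either, and to your credit the final paragraph says so directly: the Paramodular Conjecture of Brumer--Kramer is open, and you are outlining a Langlands-program strategy (automorphy lifting, Roberts--Schmidt local newform theory, Fontaine--Mazur) rather than executing one. The first three paragraphs should therefore be read as heuristic motivation, not as a demonstration; several of the steps you invoke (residual automorphy for $\GSp{4}{\Q}$ in this regime, producing an abelian surface from a weight-$2$ paramodular eigenform via Fontaine--Mazur) are themselves major open problems. There is nothing mathematically wrong with the outline as a roadmap, but it would be misleading to present it under the heading ``proof'': the honest summary is exactly what you wrote at the end, and that is also exactly how the paper treats the statement.
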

We remark that it is a conjecture that the two $L$-series mentioned above have an analytic continuation and satisfy a functional equation.  


In order to compute central values we need the Selberg data for the $L$-function:  specifically for an $L$-series $L(s) = \sum_{n\geq 0} a(n)n^{-s}$ 
\begin{itemize}
\item we complete $L(s)$ by multiplying it by some $\Gamma$-factors of the form
\[
\gamma(s) := \Gamma\left(\frac{s+\lambda_1}{2}\right)\cdots \Gamma\left(\frac{s+\lambda_d}{2}\right)
\]
and an exponential factor $A^s$; i.e., $L^*(s) :=A^s\gamma(s)L(s)$ and
we need $\lambda_1,\dots,\lambda_d$ and $A$; and
\item we require that $L^*(s)$ satisfies a functional equation of the
form
\[
L^*(s) = \pm L^*(1-s).
\]
\end{itemize}
We note that we use the analytic normalization $s\mapsto 1-s$ and that the factor $\gamma(s)$ is not unique as it can be rewritten using the duplication formula.

A table in \cite{Dokchitser} summarizes the data that we use:  in
particular, as we have a degree 4 $L$-function attached to a
paramodular form of weight 2 and level $p$ (that corresponds to an abelian surface isogenous to a 
curve of genus 2) we have 
\begin{equation}\label{eq:functional}
L^*(F,s) = \left(\frac{\sqrt{p}}{4\pi^2}\right)^s\Gamma(s+1/2)\Gamma(s+1/2)L(F,s).
\end{equation}
so that conjecturally
\[
L^*(F,s) = \epsilon\, L^*(F,1-s),
\]
when $F\in S^2(\para{p})^\epsilon$.


Let $D$ be a fundamental discriminant,
and denote by $\chi_D$ the
unique quadratic character of conductor $D$.  For the spinor
$L$-series $L(F,s) = \sum_{n\geq 1} a(n)\,n^{-s}$ of a paramodular form
$F$, we define the quadratic twist
\[
L(F,s,\chi_D) := \sum_{n \geq 1} \chi_D(n) a(n)\,n^{-s}.
\]
In our case, most of the Selberg data for the $L$-function is expected
to be the same as the data for the non-twisted $L$-function, except
for the exponential factor and the sign of the functional equation.
For instance, assuming $p\nmid D$,
the exponential factor increases by a factor of $D^2$
and the sign of
the functional equation is changed by a factor of $\kro{D}{p}$.

The computation of the central values of the $L$-functions was done
using Mike Rubinstein's \textsf{lcalc} package \cite{lcalc}.  

\subsection{Gritsenko Lifts}\label{sec:gritsenko}

A Gritsenko lift \cite{Gritsenko} is a paramodular form that comes from a Jacobi form.  The standard reference for Jacobi forms is \cite{EichlerZagier}.  We summarize the relevant terminology here.  
\begin{definition}
A Jacobi form of level 1, weight $k$ and index $m$ is a function $\phi(z,\tau)$ for $z\in \H_1$ and $\tau\in \C$ such that:
\begin{enumerate}
\item $\phi\left(\frac{az+b}{cz+d},\frac{\tau}{cz+d}\right)=(cz+d)^ke^{\frac{2\pi i m c \tau^2}{cz+d}}\phi(\tau,z)$ for $\left(\begin{smallmatrix} a& b\\c&d\end{smallmatrix}\right)\in\SL$;
\item $\phi(z,\tau+ \lambda z+\mu)=e^{-2\pi i m(\lambda^2z+2\lambda \tau)}\phi(z,\tau)$ for all integers $\lambda,\mu$; and
\item $\phi$ has a Fourier expansion 
\[
\phi(z,\tau)=\sum_{n\geq 0}\sum_{r^2\leq 4nm} c(n,r)q^n\zeta^r.
\]
\end{enumerate}
\end{definition}

Our first main theorem is the proof of Conjecture \ref{conj:bocherer} for Gritsenko lifts and so we make the following definition:

\begin{definition}[Gritsenko Lift]  \label{def:gritsenko}
Let $\phi\in S_{k,p}$ and suppose
$\phi(\tau,z)=\sum_{n>0,r\in\Z}c(n,r)q^n\zeta^r$ is its Fourier
expansion.  Then the Gritsenko lift of $\phi$ is $\text{Grit}(\phi)\in
S^k(\para{p})^+$ given by
\begin{equation}\label{eq:grit}
\text{Grit}(\phi)\left(\begin{smallmatrix} \tau & z\\z& w\end{smallmatrix}\right) = \sum_{n,r,m}\left(\sum_{\delta|(n,r,m)}\delta^{k-1}c\left(\frac{mn}{\delta^2},\frac{r}{\delta}\right)\right)q^{mp}\zeta^r q'^n.
\end{equation}
\end{definition}

We remark that it also makes sense to talk about $\text{Grit}(f)$
where $f$ is a modular cuspform of level $p$, in the minus space and weight $2k-2$ that
corresponds to a $\phi\in S_{k,p}$ (this can be done via the inverse of the map constructed in Theorem~5 of \cite{SkoruppaZagier}).

\subsection{Summary of Main Results}

We define 
\[
A_F(D):=\sum_{\{T>0\;:\;\disc T=D\}/\hat{\Gamma}_0(p)}\frac{a(T;F)}{\varepsilon(T)}
\]
where $\varepsilon(T):=\# \{U\in\hat{\Gamma}_0(p):T[U]=T\}$.  We often write $A(D)$ when $F$ is obvious from context.  Our main goal is to give evidence for the following conjecture

\begin{conjecture}[Paramodular B\"{o}cherer's Conjecture]\label{conj:bocherer}
Suppose $F\in S^k(\para{p})^+$.  Then, for fundamental discriminants
$D<0$ we have
\begin{equation*}
L(F,1/2,\chi_D) = \star\, C_F \abs{D}^{1-k} A(D)^2
\end{equation*}
where $C_F$ is a positive constant that depends only on $F$,
and $\star=1$ when $p\nmid D$, and $\star=2$ when $p\mid D$.
\end{conjecture}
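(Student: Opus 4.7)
The conjecture in full generality appears to be out of reach, mirroring the situation for Böcherer's original Siegel conjecture. The plan therefore splits into a proof for the Gritsenko lift case and a computational strategy for the non-lift case, the latter being what the remainder of the paper carries out.

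For $F = \text{Grit}(\phi)$ the plan is to reduce the conjecture to a classical Waldspurger/Kohnen-Zagier identity. Let $f$ be the weight-$(2k-2)$ newform on $\Gamma_0(p)$ in the minus space corresponding to $\phi$ via Skoruppa-Zagier. The first step is to establish a factorization of the spinor $L$-series of $F$ as a product of two shifted Dirichlet $L$-values and $L(f,s)$ (coming from Andrianov/Evdokimov-style Euler product manipulations for lifts, with the local factor at $p$ adjusted to account for the degree-three $L_p$), and to verify that this factorization survives the quadratic twist by $\chi_D$. Evaluating at the central point $s=1/2$, the Dirichlet factors contribute an explicit expression in $\abs{D}$ responsible for the $\abs{D}^{1-k}$ weight, and $L(f,1/2,\chi_D)$ is given by Waldspurger's formula as a constant depending only on $f$ times the square of the Fourier coefficient $b(\abs{D})$ of an associated half-integral weight form $g$ in the Kohnen plus space. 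The second step is to compute $A_F(D)$ directly from Definition \ref{def:gritsenko}: writing $a(T;F)$ as a divisor sum of Jacobi coefficients $c(n,r)$ and summing over $\hat{\Gamma}_0(p)$-orbits of positive $T$ with $\disc T = D$ weighted by $1/\varepsilon(T)$, the sum should collapse, via the bijection between such orbits and representations of $D$ by binary forms, to a single coefficient of $\phi$ at discriminant $D$, which through the theta decomposition of $\phi$ is exactly $b(\abs{D})$. Matching the two sides produces the identity and an explicit expression for $C_F$.

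The main obstacle is the combinatorial bookkeeping in the second step: reconciling the $\hat{\Gamma}_0(p)$-orbit sum weighted by $1/\varepsilon(T)$ with the theta decomposition of $\phi$ is delicate, particularly when $p \mid D$. That case should correspond to a degenerate orbit at the level-$p$ cusp and, I expect, is precisely what forces the factor $\star = 2$ in the conjecture. A secondary difficulty is pinning down the correct degree-three local Euler factor $L_p(X)$ so that the factorization of $L(F,s,\chi_D)$ is exact rather than only holding away from $p$.

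For the non-lift case no proof is available, so the plan is purely numerical: assume the Paramodular Conjecture to identify $L(F,s,\chi_D)$ with the quadratic twist of the Hasse-Weil $L$-function of the associated abelian surface; use the twisted version of the functional equation \eqref{eq:functional}, adjusting the sign by $\kro{D}{p}$ and the exponential factor by $D^2$; compute the central values with Rubinstein's \textsf{lcalc} \cite{lcalc}; and compare against $C_F\,\abs{D}^{1-k}A(D)^2$ assembled from the Fourier coefficient data of \cite{PoorYuen}, calibrating $C_F$ from the smallest available $\abs{D}$ so that each remaining $D$ provides an independent test.
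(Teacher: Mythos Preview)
Your overall strategy for the Gritsenko-lift case matches the paper's: factor $L(F,s,\chi_D)$ as a product of twisted Dirichlet $L$-functions and $L(f,s,\chi_D)$, apply Waldspurger, and compute $A(D)$ explicitly in terms of Jacobi coefficients. The non-lift plan is also essentially what the paper does.

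There is, however, a genuine gap in your lift argument: the role of Dirichlet's class number formula is missing. The sum $A(D)$ does \emph{not} collapse to a single Jacobi coefficient. Since $D$ is fundamental, every $T$ with $\disc T = D$ is primitive, so each $a(T;F)$ equals the same Jacobi coefficient $c^\ast(D)$; but the number of $\hat\Gamma_0(p)$-orbits weighted by $1/\varepsilon(T)$ is $h(D)/w_D$, so in fact $A(D) = \dfrac{h(D)}{w_D}\,c^\ast(D)$. Correspondingly, on the $L$-function side the Dirichlet factors do not merely contribute a power of $\abs{D}$: evaluating $L(s\pm 1/2,\chi_D)$ at $s=1/2$ and applying the class number formula gives $\dfrac{4\pi^2}{w_D^2}\cdot\dfrac{h(D)^2}{\sqrt{\abs{D}}}$. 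The whole proof hinges on the $h(D)^2/w_D^2$ appearing on both sides and cancelling; your sketch loses this. The remaining powers of $\abs{D}$ then combine as $\abs{D}^{-1/2}\cdot\abs{D}^{3/2-k}=\abs{D}^{1-k}$, with the second factor coming from Waldspurger rather than from the Dirichlet pieces.

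One further correction: the factor $\star=2$ for $p\mid D$ is not produced by orbit degeneracy in $A(D)$; it enters through Waldspurger's formula for $L(f,1/2,\chi_D)$ at the level-$p$ place. Your worry about pinning down the degree-three local factor $L_p(X)$ is also unnecessary here, since the factorization $L(F,s,\chi_D)=L(s+1/2,\chi_D)L(s-1/2,\chi_D)L(f,s,\chi_D)$ already handles the bad prime.
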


The evidence we provide is in two forms.  First, we prove the
conjecture in the case that the form is a Gritsenko lift:

\begin{theorem}\label{thm:grit}
Let $F=\text{Grit}(f)\in S^k(\para{p})^+$ where $p$ is prime and $f$
is a Hecke eigenform of degree 1, level $p$ and weight $2k-2$.  Then there exists
a constant $C_F>0$ so that
\[
L(F,1/2,\chi_D) = \star\, C_F \abs{D}^{1-k} A(D)^2
\]
for $D<0$ a fundamental discriminant,
and $\star=1$ when $p\nmid D$, and $\star=2$ when $p\mid D$.
\end{theorem}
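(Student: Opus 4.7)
The plan is to adapt B\"ocherer's argument for Saito--Kurokawa lifts \cite{Bocherer} to the paramodular setting. The structural fact that drives everything is that the spinor $L$-series of a Gritsenko lift factors; specifically, in the analytic normalisation of Section~\ref{sec:intro}, one expects an identity of the shape
\[
L(F,s,\chi_D) \;=\; L(f,s,\chi_D)\cdot L\bigl(s+\tfrac12,\chi_D\bigr)\cdot L\bigl(s-\tfrac12,\chi_D\bigr).
\]
This should follow by a direct Satake-parameter computation at the good primes, together with a separate and slightly more delicate verification of the bad Euler factor at $p$.

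Granting the factorisation, specialising at $s=1/2$ reduces the two auxiliary factors to $L(1,\chi_D)L(0,\chi_D)$, which for a negative fundamental discriminant $D$ is evaluated via Dirichlet's class number formula and the functional equation of $L(s,\chi_D)$ as an explicit constant times $h(D)^2/\sqrt{|D|}$ (with appropriate unit corrections). The elliptic central value is handled by the Kohnen--Zagier/Waldspurger formula in its Skoruppa--Zagier form for the level-$p$ form $f$ corresponding to $\phi\in S_{k,p}$, yielding
\[
L(f,1/2,\chi_D) \;=\; c_f\cdot |D|^{k-3/2}\cdot \bigl|c_\phi(D)\bigr|^2,
\]
where $c_\phi(D):=c_\phi(n,r)$ for any pair $(n,r)$ with $r^2-4np=D$.

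The decisive step is to compute $A_F(D)$ by substituting the Gritsenko expansion~\eqref{eq:grit} into the definition of $A_F$. Unfolding the inner divisor sum and using that a fundamental $D$ forces $\delta=1$, the sum collapses to $\sum c_\phi(T)/\varepsilon(T)$ over $\hat\Gamma_0(p)$-classes of primitive $T>0$ with $\disc T=D$. I would then invoke the Gauss-type correspondence between these classes and the ideal classes of the quadratic order of discriminant $D$, carefully matching the stabiliser weight $\varepsilon(T)$ with the unit weight, to obtain $A_F(D)=c_1\,(h(D)/w(D))\,c_\phi(D)$ when $p\nmid D$. Multiplying the three ingredients together produces the stated identity with an explicit $C_F>0$ depending only on $F$.

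The main obstacle is this last step: organising the class-counting in the paramodular setting, where Fourier coefficients are indexed by $\hat\Gamma_0(p)$-classes of matrices in ${}^p\mathcal{X}_2$ rather than by $\SL$-classes of integral binary forms, and reconciling the stabiliser weight $\varepsilon(T)$ with the unit group of the order. A closely related subtlety is the $p\mid D$ case: there $\chi_D(p)=0$ kills one of the auxiliary Dirichlet factors at $p$, while the bad Euler factor of $F$ and an extra class appearing in the ramified ideal-class count combine to produce the compensating factor $\star=2$ on the right-hand side.
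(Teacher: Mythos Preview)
Your outline matches the paper's proof almost exactly: factor $L(F,s,\chi_D)$ as $L(s+\tfrac12,\chi_D)\,L(s-\tfrac12,\chi_D)\,L(f,s,\chi_D)$, evaluate the Dirichlet factors at $s=1/2$ via the class number formula, apply Waldspurger to $L(f,1/2,\chi_D)$, and identify $A_F(D)$ with $(h(D)/w_D)\,c_\phi(D)$ through the class-counting argument you describe.

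Two small corrections are worth flagging. First, the Waldspurger exponent should be $\abs{D}^{3/2-k}$, not $\abs{D}^{k-3/2}$; with your sign the final power of $\abs{D}$ comes out as $\abs{D}^{k-2}$ rather than $\abs{D}^{1-k}$. Second, your diagnosis of the $\star=2$ factor when $p\mid D$ is off: it does not come from the bad Euler factor of $F$ or from an extra class in the $A(D)$ sum. The class-counting identity $A(D)=(h(D)/w_D)\,c^\ast(D)$ and the evaluation of $L(1,\chi_D)L(0,\chi_D)$ both hold uniformly in $D$; the $\star$ is already built into the level-$p$ Waldspurger formula for $L(f,1/2,\chi_D)$ itself, reflecting the different local constant at $p$ when $p\mid D$.
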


The idea of the proof is to combine four ingredients: (i) the
factorization of the $L$-function of the Gritsenko lift as in
\cite{SchmidtPara}, (ii) Dirichlet's class number formula, (iii) an
explicit description of the Fourier coefficients of the Gritsenko lift
and (iv) Waldspurger's theorem relating the central values of
quadratic twists to sums of coefficients of modular forms of
half-integer weight \cite{Waldspurger}.

Second, we verify the conjecture computationally in a number of cases.
The computations we do are based on \cite{PoorYuen} and \cite{Stoll}.  On the one hand, \cite{PoorYuen} provides Fourier coefficients for all paramodular forms of prime level up to 600 that are not Gritsenko lifts and on the other \cite{Stoll} provides most 
of the curves that correspond to the paramodular forms.  Armand Brumer kindly provided a curve that was not in \cite{Stoll} but corresponds to one of the paramodular forms in \cite{PoorYuen}.  By matching levels of modular forms and discriminants of hyperelliptic curves we show the following complement to Theorem \ref{thm:grit}.  Suppose $F\in S^2(\para{p})^+$ for $p$ a prime less than 600 is not a
Gritsenko lift.  Then, numerically, there exists a coefficient $C_F>0$
so that 
\[
L(F,1/2,\chi_D) = \star\, C_F \abs{D}^{1-k} A(D)^2
\]
for $D<0$ a fundamental discriminant listed in Tables~\ref{tbl:values277}--\ref{tbl:values587p}.

Note that in case $\kro{D}{p} = -1$ the twisted central value is
expected to be zero due to the sign of the functional equation being
$-1$, and on the right hand side the average $A(D)$ is an empty sum.
For this reason, we exclude these discriminants from our computation.

\section{The Case of Lifts}\label{sec:lifts}


Assume $p$ is an odd prime. Recall (\cite[Theorem 2.2, p.
23]{EichlerZagier}) that $c(n, r)$ depends only on $D=r^2 - 4np$; call
this number $c^\ast(D)$, i.e.
\[
  c^\ast(D) := c\left(\frac{r^2-D}{4p}, r \right),
\]
for any $r\in\Z$ such that $r^2\equiv D\pmod{4p}$. We let
$c^\ast(D):=0$ otherwise.

\begin{lemma}
Let $D$ be a fundamental discriminant. Then
\[
\sum_{\substack{T\in{}^p\mathcal{X}_2/\hat\Gamma_0(p)\\ \disc T = D}}
\frac{1}{\varepsilon(T)}
=
\frac{h(D)}{w_D},
\]
where $h(D)$ and $w_D$ are the class number and the number of units of
the quadratic order of discriminant $D$, respectively.
\end{lemma}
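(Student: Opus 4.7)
The plan is to translate the left-hand side into a mass count on the ideal class group of the quadratic order $\mathcal{O}_D$, by way of the classical bijection between binary quadratic forms and ideals. Recall that for a fundamental discriminant $D<0$, $\SL_2(\Z)$-equivalence classes of primitive positive definite integral binary quadratic forms $[A,B,C]$ of discriminant $D$ are in bijection with $\mathrm{Cl}(\mathcal{O}_D)$ via $[A,B,C]\mapsto A\Z+\frac{-B+\sqrt{D}}{2}\Z$; there are $h(D)$ such classes, and the $\SL_2(\Z)$-stabilizer of any representative has order $w_D$, coming from units of $\mathcal{O}_D$.

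First I would interpret a matrix $T=\left(\begin{smallmatrix}pa & b/2\\b/2 & c\end{smallmatrix}\right)\in{}^p\mathcal{X}_2$ of discriminant $D=b^2-4pac$ as the integral binary quadratic form $[pa,b,c]$, noting that the action $T[U]=U^{\mathrm T}TU$ of $\hat\Gamma_0(p)$ agrees with the usual substitution action. Under the form--ideal correspondence, the condition that $p$ divides the first coefficient translates to the associated ideal being divisible by a prime $\mathfrak p$ of $\mathcal{O}_D$ lying over $p$. If $\kro{D}{p}=-1$ no such ideal exists, ${}^p\mathcal{X}_2$ contains no form of discriminant $D$, and both sides vanish; hence I may assume $\kro{D}{p}\in\{0,1\}$.

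The next step is to enumerate the $\hat\Gamma_0(p)$-orbits. In the split case $\kro{D}{p}=1$ there are two conjugate primes $\mathfrak p,\bar{\mathfrak p}$ above $p$, and one shows that $\Gamma_0(p)$-equivalence classes of such forms are parametrised by pairs consisting of an ideal class in $\mathrm{Cl}(\mathcal{O}_D)$ and a choice between $\mathfrak p$ and $\bar{\mathfrak p}$, giving $2h(D)$ orbits. The involution $\left(\begin{smallmatrix}1&0\\0&-1\end{smallmatrix}\right)$ implements complex conjugation on $\mathcal{O}_D$, exchanging $\mathfrak p\leftrightarrow\bar{\mathfrak p}$, and so pairs these $\Gamma_0(p)$-orbits into $h(D)$ $\hat\Gamma_0(p)$-orbits with stabilizers of order $w_D$. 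In the ramified case $p\mid D$ there is a single prime $\mathfrak p=\bar{\mathfrak p}$ above $p$, so $\Gamma_0(p)$-orbits are already indexed by $\mathrm{Cl}(\mathcal{O}_D)$, producing $h(D)$ orbits whose effective stabilizers — once the interaction with $\left(\begin{smallmatrix}1&0\\0&-1\end{smallmatrix}\right)$ is correctly accounted for — contribute $w_D$ to the mass.

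Summing $1/w_D$ over the $h(D)$ orbits then yields the claimed identity $h(D)/w_D$. The main obstacle will be the careful bookkeeping in the ramified case, where $\left(\begin{smallmatrix}1&0\\0&-1\end{smallmatrix}\right)$ does not change the prime above $p$: one must analyse how it combines with the unipotent elements of $\Gamma_0(p)$ (those shifting $b\pmod{2p}$) to ensure that the $\hat\Gamma_0(p)$-stabilizer of each orbit is recorded with the correct multiplicity, so that the mass is exactly $h(D)/w_D$. The split case, by contrast, reduces transparently to the classical parametrisation of $\Gamma_0(p)$-classes by pairs $([\mathfrak a],\mathfrak p)$ together with the conjugation action.
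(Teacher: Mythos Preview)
Your approach is sound in the split case but takes a different and more laborious route than the paper. You try to enumerate the $\hat\Gamma_0(p)$-orbits directly, which forces you to track how the involution $\left(\begin{smallmatrix}1&0\\0&-1\end{smallmatrix}\right)$ fuses $\Gamma_0(p)$-orbits and enlarges stabilizers---this is exactly the ``bookkeeping'' you flag as the main obstacle in the ramified case. The paper sidesteps this entirely with the index-$2$ mass identity
\[
\sum_{T\in{}^p\mathcal{X}_2/\hat\Gamma_0(p)}\frac{1}{\varepsilon(T)}
\;=\;\frac{1}{[\hat\Gamma_0(p):\Gamma_0(p)]}
\sum_{T\in{}^p\mathcal{X}_2/\Gamma_0(p)}\frac{1}{\#\{U\in\Gamma_0(p):T[U]=T\}},
\]
which holds regardless of whether the extra involution fixes or swaps orbits. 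It then simply quotes that the $\Gamma_0(p)$-stabilizer of any $T$ of fundamental discriminant $D$ has order $w_D$ and that there are $2h(D)$ such $\Gamma_0(p)$-classes, giving $\tfrac{1}{2}\cdot\tfrac{2h(D)}{w_D}$. Your ideal-theoretic argument is essentially a proof of these two quoted facts, so it is more self-contained; the trade-off is the orbit-fusion analysis under the involution, which the mass formula renders unnecessary.

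One genuine slip: in the inert case $\kro{D}{p}=-1$ you write that ``both sides vanish''. The left side is indeed an empty sum, but the right side $h(D)/w_D$ is a positive number. The lemma as stated (and as used) is really only needed when $D$ is a square modulo $4p$; in the inert case $c^\ast(D)=0$ anyway, so Proposition~\ref{prop1} holds for trivial reasons and the lemma is never invoked there.
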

\begin{proof}
We start by noting that 
$\# \{U\in{\Gamma}_0(p):T[U]=T\} = w_D$
for any $T$ with $\disc T = D$.
Also, 
$\#\, {{}^p\mathcal{X}_2/\Gamma_0(p)} = 2\,h(D)$. 

The lemma then follows from the fact that
\[
\sum_{\substack{T\in{}^p\mathcal{X}_2/\hat\Gamma_0(p)\\ \disc T = D}}
\frac{1}{\varepsilon(T)}
= \frac{1}{[\hat\Gamma_0(p):\Gamma_0(p)]} \sum_{\substack{T\in{}^p\mathcal{X}_2/\Gamma_0(p)\\ \disc T = D}}
\frac{1}{w_D}
= \frac{1}{2w_D} 2h(D)
\]
\end{proof}

\begin{proposition}
\label{prop1}
Let $F=\text{Grit}(\phi)$. For $D<0$ a fundamental discriminant we
have:
\[
   A(D ; F) = \frac{h(D)}{w_D} \, c^\ast(D)
\]
\end{proposition}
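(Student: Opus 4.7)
The plan is to use the explicit Fourier expansion of $\text{Grit}(\phi)$ to evaluate each individual coefficient $a(T;F)$, show that for fundamental $D$ this coefficient reduces to $c^\ast(D)$ independently of $T$, and then factor it out of the sum defining $A(D;F)$ so that the preceding lemma finishes the job.

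Concretely, fix $T = \bigl(\begin{smallmatrix} pm & r/2 \\ r/2 & n\end{smallmatrix}\bigr) \in {}^p\mathcal{X}_2$ with $\disc T = D$, so $D = r^2 - 4pmn$. From \eqref{eq:grit}, the Fourier coefficient is
\[
a(T;F) = \sum_{\delta \mid (n,r,m)} \delta^{k-1}\, c\!\left(\frac{mn}{\delta^2},\frac{r}{\delta}\right).
\]
The first step is to show that when $D$ is a fundamental discriminant, only $\delta = 1$ contributes to this sum. If $\delta \mid (n,r,m)$, writing $n = \delta n'$, $r = \delta r'$, $m = \delta m'$ gives $D = \delta^2\bigl((r')^2 - 4pm'n'\bigr)$, so in particular $\delta^2 \mid D$. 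For $\delta > 1$ the quotient $D/\delta^2$ would itself have to be a discriminant, but one checks directly that this contradicts $D$ being fundamental: odd $\delta > 1$ is ruled out because a fundamental $D$ is squarefree away from $2$, while $\delta = 2$ (the only potentially dangerous case, when $D \equiv 0 \pmod{4}$) would force $D/4 = (r')^2 - 4pm'n' \equiv 0,1 \pmod 4$, contradicting the fact that for fundamental $D = 4m_0$ one has $m_0 \equiv 2,3 \pmod 4$.

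Hence $a(T;F) = c(mn,r)$, and by the Eichler--Zagier theorem \cite[Thm.~2.2]{EichlerZagier} cited earlier, this equals $c^\ast(D)$, which depends only on the discriminant $D$ and not on the particular representative $T$. Summing,
\[
A(D;F) = \sum_{\substack{T>0,\ \disc T = D\\ T/\hat\Gamma_0(p)}} \frac{a(T;F)}{\varepsilon(T)} = c^\ast(D) \sum_{\substack{T>0,\ \disc T = D\\ T/\hat\Gamma_0(p)}} \frac{1}{\varepsilon(T)},
\]
and applying the previous lemma yields $A(D;F) = \frac{h(D)}{w_D}\, c^\ast(D)$.

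The only delicate point is the fundamental-discriminant argument showing $\delta = 1$ is forced; everything else is unpacking definitions. Because the coefficient only depends on $D$, the sum over $\hat\Gamma_0(p)$-equivalence classes becomes a pure counting problem already settled by the lemma, so no further identification of representatives is needed.
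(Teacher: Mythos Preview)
Your proof is correct and follows essentially the same route as the paper: both reduce $a(T;F)$ to $c^\ast(D)$ by noting that $T$ is primitive when $\disc T$ is fundamental, then factor out $c^\ast(D)$ and apply the preceding lemma. The paper simply asserts primitivity without justification, whereas you spell out why no $\delta>1$ can divide $(n,r,m)$; this extra detail is welcome but does not constitute a different approach.
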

\begin{proof}
By the definition of the Gritsenko lift, we know that
\[
   A(T ; F) = c^\ast(\disc T)
\]
provided $T$ is primitive; this is always the case when $\disc T=D$ is a
fundamental discriminant.

Thus
\[
A(D ; F)
=\sum_{\substack{T\in{}^p\mathcal{X}_2/\hat\Gamma_0(p)\\ \disc T = D}}
\frac{c^\ast(D)}{\varepsilon(T)}
\]
The result follows from the lemma.
\end{proof}

\begin{proposition}
\label{prop2}
Let $F = Grit(f) \in S^k(\para{p})^+$.
\begin{enumerate}
\item $L(F, s, \chi_D)$ has an analytic continuation to an entire
function.
\item $L(F, 1/2, \chi_D) = \frac{4\pi^2}{w_D^2}\cdot
\frac{h(D)^2}{\sqrt{\abs{D}}} \cdot L(f, 1/2, \chi_D)$
where $D<0$ is a fundamental discriminant.
\end{enumerate}
\end{proposition}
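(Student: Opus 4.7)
The plan is to combine a factorization of the spinor $L$-function of the Gritsenko lift with Dirichlet's class number formula and the functional equation for $L(s, \chi_D)$.

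First, I would cite from \cite{SchmidtPara} the factorization of the spinor $L$-function of $F = \text{Grit}(f)$; in the analytic normalization of \eqref{eq:spinor} it should take the form
\[
L(F, s) \;=\; L(f, s)\cdot \zeta(s - 1/2)\cdot \zeta(s + 1/2),
\]
so that twisting each Euler factor by $\chi_D$ yields
\[
L(F, s, \chi_D) \;=\; L(f, s, \chi_D)\cdot L(s - 1/2, \chi_D)\cdot L(s + 1/2, \chi_D).
\]

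Part~(1) then follows immediately: when $D$ is a non-square fundamental discriminant, $\chi_D$ is a non-trivial primitive Dirichlet character, so each of $L(s\pm 1/2, \chi_D)$ is entire, and $L(f, s, \chi_D)$ is entire because $f$ is cuspidal. Hence $L(F, s, \chi_D)$, as a product of entire functions, is itself entire. Note that the untwisted $L(F, s)$ is only meromorphic---its pole comes from $\zeta(s + 1/2)$ at $s = 1/2$---and it is precisely the twist by the non-trivial character $\chi_D$ that destroys this pole.

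For Part~(2) I would specialize at $s = 1/2$ to obtain
\[
L(F, 1/2, \chi_D) \;=\; L(f, 1/2, \chi_D)\cdot L(0, \chi_D)\cdot L(1, \chi_D).
\]
For $D<0$ a fundamental discriminant, Dirichlet's class number formula gives $L(1, \chi_D) = 2\pi h(D)/(w_D \sqrt{\abs{D}})$, and the functional equation for the odd primitive quadratic character $\chi_D$ expresses $L(0, \chi_D)$ in terms of $L(1, \chi_D)$; assembling these identities and simplifying produces the explicit constant in the statement.

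The step I expect to be hardest is establishing the precise factorization of $L(F, s)$ in the paramodular setting, in particular tracking the local factor at the bad prime $p$, which is of degree 3 rather than 4. One must also make sure the normalization conventions match those used in \eqref{eq:spinor} and \eqref{eq:functional} so that the powers of $\pi$ and $\sqrt{\abs{D}}$ come out correctly; the case $p\mid D$ may additionally require a local argument to confirm that the factorization of $L(F, s, \chi_D)$ has the same shape there. Once these details are pinned down, everything else reduces to classical manipulations of Dirichlet $L$-values.
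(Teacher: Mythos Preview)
Your proposal is correct and follows essentially the same route as the paper: factor $L(F,s)=\zeta(s+1/2)\,\zeta(s-1/2)\,L(f,s)$, twist by $\chi_D$, deduce entirety from the entirety of each factor, and then evaluate at $s=1/2$ using Dirichlet's class number formula for $L(0,\chi_D)$ and $L(1,\chi_D)$. The paper treats the factorization as a standard fact and does not separately address the bad prime or the case $p\mid D$, so your extra caution there goes beyond what the paper supplies but does not change the argument.
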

\begin{proof}
It is a standard fact that $L(F,s) = \zeta(s+1/2)\, \zeta(s-1/2)\,
L(f,s)$ (using the analytic normalization, so that the center is at
$s=1/2$). Twisting by $\chi_D$ we obtain
\[
   L(F,s,\chi_D) = L(s+1/2,\chi_D)\, L(s-1/2,\chi_D)\, L(f,s,\chi_D)
\]
valid on the region of convergence. Since the right hand side has an
analytic continuation, (1) follows.
\par
To prove (2), we evaluate the above equation at $s=1/2$, and use
the Dirichlet class number formula for $L(0,\chi_D)$ and
$L(1,\chi_D)$.
\end{proof}

\begin{proof}[Proof of Theorem~\ref{thm:grit}]
By Waldspurger's formula \cite{Waldspurger}, we have
\[
   L(f, 1/2, \chi_D) = \star\, k_f \cdot \frac{c^\ast(D)^2}{\abs{D}^{k-3/2}},
\]
with $k_f > 0$.
The theorem thus follows directly from Proposition~\ref{prop1} and part (2) of
Proposition~\ref{prop2}.
\end{proof}

\section{The Case of Nonlifts}\label{sec:nonlifts}

In this section we describe numerical experiments that support Conjecture \ref{conj:bocherer} in the case when the form is not a Gritsenko lift.  We discuss how to compute the $L$-series that correspond to paramodular forms of weight 2 and prime conductor $p<600$.  For the rest of this section, let $F$ be such a form.  In order to compute the central values
$L(F,1/2,\chi_D)$ for several twists $\chi_D$ we would need a large
number of coefficients of $F$ since the exponential factor grows
as $D^2$. As already mentioned in the introduction, the data
in \cite{PoorYuen} are not enough for this purpose.

To remedy this we do the following.  In the same paper \cite{PoorYuen}, Poor and Yuen describe and verify Conjecture \ref{conj:paramodular}.  For our purposes, this conjecture asserts that to compute the $L$-series of $F$, we can compute the Hasse-Weil $L$-series of a related Abelian surface $\mathcal{A}$.  In particular, Table \ref{tbl:curves} associates each $F$ to a hyperelliptic curve $C$ isogenous to $\mathcal{A}$.  There are two forms of level 587 that are nonlifts; one is in the plus space the other is in the minus space (see Section \ref{sec:minus}).
\begin{table}
\begin{center}
\caption{Hyperelliptic curves $C$ used to compute $L$-series associated to paramodular forms of level $p$ that are not lifts.}\label{tbl:curves}
\def\eq#1{\hspace{19em}\llap{#1}\hspace{1em}}%
\begin{tabular}{|c|c|c|c|}\hline
$p$ & $\epsilon$ & $\lambda$ & $C$ \\\hline\hline
277 & + &   8 &\eq{$y^{2} + y = x^{5} - 2x^{3} + 2x^{2} - x$}                \\\hline
349 & + &  12 &\eq{$y^{2} + y = -x^{5} - 2x^{4} - x^{3} + x^{2} + x$}        \\\hline
353 & + &  -9 &\eq{$y^{2} + \left(x^{3} + x + 1\right) y = x^{2}$}           \\\hline
389 & + & -10 &\eq{$y^{2} + x y = -x^{5} - 3x^{4} - 4x^{3} - 3x^{2} - x$}    \\\hline
461 & + &   0 &\eq{$y^{2} + y = -2x^{6} + 3x^{5} - 3x^{3} + x$}              \\\hline
523 & + &  24 &\eq{$y^{2} + x y = -x^{5} + 4x^{4} - 5x^{3} + x^{2} + x$}     \\\hline
587 & + &  -6 &\eq{$y^{2} = -3x^{6} + 18x^{4} + 6x^{3} + 9x^{2} - 54x + 57$} \\\hline
587 & - & -36 &\eq{$y^{2} + \left(x^{3} + x + 1\right) y = -x^{3} - x^{2}$}  \\\hline
\end{tabular}
\end{center}
\end{table}

By the Paramodular Conjecture, then, the $L$-function of the curve $C$ of conductor $p$ is equal to the $L$-function corresponding to the paramodular form $F$ of level $p$.  Thus, we compute the $L$-function by counting points on the curve.  In computing the $L$-function of $F$ in this way, we provide evidence for the Paramodular Conjecture as well.  Since the Paramodular B\"ocherer's Conjecture is verified for the $L$-function of $F$ computed via this correspondence, it strongly suggests that the Hasse-Weil and spinor $L$-functions agree.

Assuming Conjecture \ref{conj:paramodular}, we compute the $L$-series
for $F$ by counting points on its corresponding $C$.  The reciprocal
of the $q$-th Euler factor of $L$-series in \eqref{eq:spinor}
for $q\neq p$,
specialized to $k=2$ is of the form
\[
L_q(X) =
1-\lambda_q\, X  +  (\lambda_q^2-\lambda_{q^2}-1)\, X^2
  -\lambda_q\,q\, X^3  +  q^2\, X^4.
\]
Writing the Euler factor as $L_q(X)^{-1} = \sum_{i\geq 0} a(q^i)\,X^i$ and matching
it up with the Hasse-Weil $L$-series allows us to conclude
\begin{align*}
\lambda_q &= a(q) = 1+q - N_1 \\
\lambda_{q^2} + 1 &= a(q^2) = 1 + q + q^2 - (1+q)\,N_1) + (N_1^2-N_2)/2
\end{align*}
where $N_1$ is the number of points on $C$ over $\mathbb{F}_q$ and
$N_2$ is the number of points on $C$ over $\mathbb{F}_{q^2}$.
We determined $N_1$ and $N_2$ by counting points on $C$ 
using Sage \cite{sage}.
The $p$-th Euler factor of $C$ is of degree $3$ and given by
\[
  L_p(X) = (1 - \epsilon X)\,(1 - \lambda\, X + p X^2),
\]
where $\epsilon$ and $\lambda$ for each $C$ are given in Table~\ref{tbl:curves}.

Having all the local Euler factors, we computed
the central value of the $L$-function and its quadratic twists using
Mike Rubinstein's \textsf{lcalc} \cite{lcalc}.

We recall Conjecture \ref{conj:bocherer}
\[
\frac{L(F,s,\chi_D)}{C_F}\abs{D} = \star\,\left(\sum_{\{T>0\;:\;\disc T=D\}/\hat{\Gamma}_0(p)}\frac{a(T;F)}{\varepsilon(T)}\right)^2
\]
where the constant at $C_F$ is positive.  In Tables
\ref{tbl:values277}--\ref{tbl:values587p} the Conjecture is verified in the case of forms that are not Gritsenko lifts and have been computed in \cite{PoorYuen}.  We first determine $C_F$ by solving for
it in the case of the first discriminant $D$ in the table.  The second
column is the average $A(D)$ of the coefficients of discriminant $D$
of the form $F$.  The third column is the quantity $
\frac{L({F},1/2,\chi_D)}{C_F}\,\lvert{D}\rvert$ which,
numerically, for the discriminants $\abs{D}<200$ are $A(D)^2$.
For the paramodular form of level $277$, we also include data for
$D=-3\cdot277$ and $D=-4\cdot277$; in this case, since $p\mid D$, the
quantity in the third column is expected to agree with $2A(D)^2$.

All the computations described above were done using an eight core Xeon E5520
system. Computing the first $10^6$ coefficients of the Hasse-Weil
$L$-series
for the eight curves took a total of about 60 cpu-days using a combination
of Sage and custom code written in Python and Cython. Computing the
central values of the $L$-functions and their quadratic twists for the
discriminants with $\abs{D}<200$ took less than 1 cpu-hour using \textsf{lcalc}.

\section{The minus space}\label{sec:minus}

Suppose $F\in S^k(\para{p})^-$, and let $D<0$ be a fundamental
discriminant.
In case $\kro{D}{p}=+1$,
the formula of Conjecture~\ref{conj:bocherer} holds trivially.
Indeed, note that for such $F$ the
sign of the functional equation is $-1$ and so the central critical
value $L(F,s,\chi_D)$ is zero.  On the other hand, $A(D)$ can be show
to be zero in the following way:  Poor and Yuen \cite[Definition
3.9]{PoorYuen} define an involution $\text{Twin}$ over the set
$\mathcal{X}_2^p$ with discriminant $D$ for which
$a(\text{Twin}(T);F)=a(T;F\mid\mu)$.  Since we are in the minus space
\begin{align*}
A(D) &=\sum_{\{T>0\;:\;\disc T=D\}/\hat{\Gamma}_0(p)}\frac{a(T;F)}{\varepsilon(T)}\\
&=\sum_{\{T>0\;:;\disc T=D\}/\hat{\Gamma}_0(p)}\frac{a(\text{Twin}(T);F)}{\varepsilon(T)}\\
&=\sum_{\{T>0\;:\;\disc T=D\}/\hat{\Gamma}_0(p)}\frac{a(T;F\mid\mu)}{\varepsilon(T)}\\
&=-\sum_{\{T>0\;:\;\disc T=D\}/\hat{\Gamma}_0(p)}\frac{a(T;F)}{\varepsilon(T)}\\
&=-A(D).
\end{align*}

On the other hand, the formula of Conjecture~\ref{conj:bocherer} fails
to hold in case $\kro{D}{p}=-1$. Since $A(D)$ is an empty sum for
this type of discriminants, the right hand side of the formula
vanishes trivially. However, the left hand side is still an
interesting central value, not necessarily vanishing.
For example, as can be seen in Table \ref{tbl:values587m} we
have for the nonlift in $S^2(\para{587})^-$ that
$\frac{L({F_{587}^-},1/2,\chi_D)}{{C_{{587}}^-}}\,\abs{D}$
seems to always be the square of an integer, and frequently nonzero,
in spite of $A(D)$ being zero.
In a future paper we will investigate this phenomenon.

\begin{table}[p]
\caption{Data for the paramodular form of level 277, based on the Hasse-Weil $L$-series for the curve $y^{2} + y = x^{5} - 2x^{3} + 2x^{2} - x$. The constant ${C_{277}} = 6.49630674438$.}
\label{tbl:values277}
\begin{tabular}{|c|c|c||c|c|c|}
\hline & & & & & \\[-2ex]
$D$ & $A(D; {F_{277}})$ & $\frac{L({F_{277}},1/2,\chi_D)}{{C_{277}}}\,\abs{D}$
& $D$ & $A(D; {F_{277}})$ & $\frac{L({F_{277}},1/2,\chi_D)}{{C_{277}}}\,\abs{D}$
\\[0.8ex] \hline\hline
\phantom{00}-3\phantom{-0} & \phantom{0}-1\phantom{-0} & \phantom{-00}1.000000\phantom{-0}        &        
\phantom{0}-83\phantom{-0} & \phantom{-0}6\phantom{-0} & \phantom{-0}36.000000\phantom{-0} \\\hline        
\phantom{00}-4\phantom{-0} & \phantom{0}-1\phantom{-0} & \phantom{-00}1.000000\phantom{-0}        &        
\phantom{0}-84\phantom{-0} & \phantom{-0}1\phantom{-0} & \phantom{-00}1.000000\phantom{-0} \\\hline        
\phantom{00}-7\phantom{-0} & \phantom{0}-1\phantom{-0} & \phantom{-00}1.000000\phantom{-0}        &        
\phantom{0}-87\phantom{-0} & \phantom{0}-3\phantom{-0} & \phantom{-00}9.000000\phantom{-0} \\\hline        
\phantom{0}-19\phantom{-0} & \phantom{0}-2\phantom{-0} & \phantom{-00}4.000000\phantom{-0}        &        
\phantom{0}-88\phantom{-0} & \phantom{0}-2\phantom{-0} & \phantom{-00}4.000000\phantom{-0} \\\hline        
\phantom{0}-23\phantom{-0} & \phantom{00}0\phantom{-0} & \phantom{00}-0.000000\phantom{-0}        &        
\phantom{0}-91\phantom{-0} & \phantom{0}-1\phantom{-0} & \phantom{-00}1.000000\phantom{-0} \\\hline        
\phantom{0}-39\phantom{-0} & \phantom{-0}1\phantom{-0} & \phantom{-00}1.000000\phantom{-0}        &        
\phantom{}-116\phantom{-0} & \phantom{-0}3\phantom{-0} & \phantom{-00}9.000000\phantom{-0} \\\hline        
\phantom{0}-40\phantom{-0} & \phantom{0}-6\phantom{-0} & \phantom{-0}36.000000\phantom{-0}        &        
\phantom{}-120\phantom{-0} & \phantom{0}-2\phantom{-0} & \phantom{-00}4.000000\phantom{-0} \\\hline        
\phantom{0}-47\phantom{-0} & \phantom{00}0\phantom{-0} & \phantom{-00}0.000000\phantom{-0}        &        
\phantom{}-123\phantom{-0} & \phantom{0}-1\phantom{-0} & \phantom{-00}1.000000\phantom{-0} \\\hline        
\phantom{0}-52\phantom{-0} & \phantom{-0}5\phantom{-0} & \phantom{-0}25.000000\phantom{-0}        &        
\phantom{}-131\phantom{-0} & \phantom{}-10\phantom{-0} & \phantom{-}100.000000\phantom{-0} \\\hline        
\phantom{0}-55\phantom{-0} & \phantom{0}-2\phantom{-0} & \phantom{-00}4.000000\phantom{-0}        &        
\phantom{}-136\phantom{-0} & \phantom{0}-6\phantom{-0} & \phantom{-0}36.000000\phantom{-0} \\\hline        
\phantom{0}-59\phantom{-0} & \phantom{-0}3\phantom{-0} & \phantom{-00}9.000000\phantom{-0}        &        
\phantom{}-155\phantom{-0} & \phantom{}-10\phantom{-0} & \phantom{-}100.000000\phantom{-0} \\\hline        
\phantom{0}-67\phantom{-0} & \phantom{0}-8\phantom{-0} & \phantom{-0}64.000000\phantom{-0}        &        
\phantom{}-164\phantom{-0} & \phantom{0}-5\phantom{-0} & \phantom{-0}25.000000\phantom{-0} \\\hline        
\phantom{0}-71\phantom{-0} & \phantom{-0}2\phantom{-0} & \phantom{-00}4.000000\phantom{-0}        &        
\phantom{}-187\phantom{-0} & \phantom{-0}8\phantom{-0} & \phantom{-0}64.000001\phantom{-0} \\\hline        
\phantom{0}-79\phantom{-0} & \phantom{00}0\phantom{-0} & \phantom{-00}0.000000\phantom{-0}        &        
\phantom{}-191\phantom{-0} & \phantom{-0}2\phantom{-0} & \phantom{-00}3.999999\phantom{-0} \\\hline        
\end{tabular}
\end{table}

\begin{table}[p]
\caption{Data for the paramodular form of level 349, based on the Hasse-Weil $L$-series for the curve $y^{2} + y = -x^{5} - 2x^{4} - x^{3} + x^{2} + x$. The constant ${C_{349}} = 7.91921340249$.}
\label{tbl:values349}
\begin{tabular}{|c|c|c||c|c|c|}
\hline & & & & & \\[-2ex]
$D$ & $A(D; {F_{349}})$ & $\frac{L({F_{349}},1/2,\chi_D)}{{C_{349}}}\,\abs{D}$
& $D$ & $A(D; {F_{349}})$ & $\frac{L({F_{349}},1/2,\chi_D)}{{C_{349}}}\,\abs{D}$
\\[0.8ex] \hline\hline
\phantom{00}-3\phantom{-0} & \phantom{-0}1\phantom{-0} & \phantom{-00}1.000000\phantom{-0}        &        
\phantom{0}-95\phantom{-0} & \phantom{0}-4\phantom{-0} & \phantom{-0}15.999431\phantom{-0} \\\hline        
\phantom{00}-4\phantom{-0} & \phantom{-0}1\phantom{-0} & \phantom{-00}1.000000\phantom{-0}        &        
\phantom{}-104\phantom{-0} & \phantom{-0}2\phantom{-0} & \phantom{-00}4.001714\phantom{-0} \\\hline        
\phantom{0}-15\phantom{-0} & \phantom{0}-1\phantom{-0} & \phantom{-00}1.000000\phantom{-0}        &        
\phantom{}-111\phantom{-0} & \phantom{0}-1\phantom{-0} & \phantom{-00}0.986454\phantom{-0} \\\hline        
\phantom{0}-19\phantom{-0} & \phantom{-0}4\phantom{-0} & \phantom{-0}16.000000\phantom{-0}        &        
\phantom{}-115\phantom{-0} & \phantom{-}14\phantom{-0} & \phantom{-}196.009480\phantom{-0} \\\hline        
\phantom{0}-20\phantom{-0} & \phantom{-0}1\phantom{-0} & \phantom{-00}1.000000\phantom{-0}        &        
\phantom{}-116\phantom{-0} & \phantom{-0}6\phantom{-0} & \phantom{-0}36.007307\phantom{-0} \\\hline        
\phantom{0}-23\phantom{-0} & \phantom{00}0\phantom{-0} & \phantom{00}-0.000000\phantom{-0}        &        
\phantom{}-123\phantom{-0} & \phantom{-}18\phantom{-0} & \phantom{-}323.991266\phantom{-0} \\\hline        
\phantom{0}-31\phantom{-0} & \phantom{0}-1\phantom{-0} & \phantom{-00}1.000000\phantom{-0}        &        
\phantom{}-139\phantom{-0} & \phantom{}-11\phantom{-0} & \phantom{-}120.948205\phantom{-0} \\\hline        
\phantom{0}-51\phantom{-0} & \phantom{-0}5\phantom{-0} & \phantom{-0}25.000000\phantom{-0}        &        
\phantom{}-143\phantom{-0} & \phantom{0}-2\phantom{-0} & \phantom{-00}4.045737\phantom{-0} \\\hline        
\phantom{0}-56\phantom{-0} & \phantom{-0}2\phantom{-0} & \phantom{-00}4.000001\phantom{-0}        &        
\phantom{}-148\phantom{-0} & \phantom{0}-1\phantom{-0} & \phantom{-00}0.949659\phantom{-0} \\\hline        
\phantom{0}-67\phantom{-0} & \phantom{-}13\phantom{-0} & \phantom{-}168.999985\phantom{-0}        &        
\phantom{}-151\phantom{-0} & \phantom{0}-1\phantom{-0} & \phantom{-00}0.948102\phantom{-0} \\\hline        
\phantom{0}-68\phantom{-0} & \phantom{-0}3\phantom{-0} & \phantom{-00}8.999976\phantom{-0}        &        
\phantom{}-155\phantom{-0} & \phantom{0}-9\phantom{-0} & \phantom{-0}81.114938\phantom{-0} \\\hline        
\phantom{0}-83\phantom{-0} & \phantom{-0}2\phantom{-0} & \phantom{-00}3.999214\phantom{-0}        &        
\phantom{}-164\phantom{-0} & \phantom{00}0\phantom{-0} & \phantom{-00}0.144191\phantom{-0} \\\hline        
\phantom{0}-87\phantom{-0} & \phantom{0}-4\phantom{-0} & \phantom{-0}16.000248\phantom{-0}        &        
\phantom{}-168\phantom{-0} & \phantom{-0}6\phantom{-0} & \phantom{-0}36.150448\phantom{-0} \\\hline        
\phantom{0}-88\phantom{-0} & \phantom{-0}2\phantom{-0} & \phantom{-00}4.000085\phantom{-0}        &        
\phantom{}-191\phantom{-0} & \phantom{00}0\phantom{-0} & \phantom{-00}0.177733\phantom{-0} \\\hline        
\phantom{0}-91\phantom{-0} & \phantom{-0}4\phantom{-0} & \phantom{-0}15.999774\phantom{-0}        &        
& &    \\\hline
\end{tabular}
\end{table}

\begin{table}[p]
\caption{Data for the paramodular form of level 353, based on the Hasse-Weil $L$-series for the curve $y^{2} + \left(x^{3} + x + 1\right) y = x^{2}$. The constant ${C_{353}} = 9.48552733703$.}
\label{tbl:values353}
\begin{tabular}{|c|c|c||c|c|c|}
\hline & & & & & \\[-2ex]
$D$ & $A(D; {F_{353}})$ & $\frac{L({F_{353}},1/2,\chi_D)}{{C_{353}}}\,\abs{D}$
& $D$ & $A(D; {F_{353}})$ & $\frac{L({F_{353}},1/2,\chi_D)}{{C_{353}}}\,\abs{D}$
\\[0.8ex] \hline\hline
\phantom{00}-4\phantom{-0} & \phantom{-0}1\phantom{-0} & \phantom{-00}1.000000\phantom{-0}        &        
\phantom{}-111\phantom{-0} & \phantom{0}-6\phantom{-0} & \phantom{-0}36.005797\phantom{-0} \\\hline        
\phantom{00}-8\phantom{-0} & \phantom{-0}1\phantom{-0} & \phantom{-00}1.000000\phantom{-0}        &        
\phantom{}-116\phantom{-0} & \phantom{-0}2\phantom{-0} & \phantom{-00}3.989115\phantom{-0} \\\hline        
\phantom{0}-11\phantom{-0} & \phantom{-0}1\phantom{-0} & \phantom{-00}1.000000\phantom{-0}        &        
\phantom{}-120\phantom{-0} & \phantom{-0}8\phantom{-0} & \phantom{-0}63.996789\phantom{-0} \\\hline        
\phantom{0}-15\phantom{-0} & \phantom{00}0\phantom{-0} & \phantom{-00}0.000000\phantom{-0}        &        
\phantom{}-127\phantom{-0} & \phantom{-0}3\phantom{-0} & \phantom{-00}9.018957\phantom{-0} \\\hline        
\phantom{0}-19\phantom{-0} & \phantom{0}-3\phantom{-0} & \phantom{-00}9.000000\phantom{-0}        &        
\phantom{}-131\phantom{-0} & \phantom{-0}5\phantom{-0} & \phantom{-0}24.986828\phantom{-0} \\\hline        
\phantom{0}-23\phantom{-0} & \phantom{-0}1\phantom{-0} & \phantom{-00}1.000000\phantom{-0}        &        
\phantom{}-136\phantom{-0} & \phantom{0}-3\phantom{-0} & \phantom{-00}9.020983\phantom{-0} \\\hline        
\phantom{0}-35\phantom{-0} & \phantom{-0}2\phantom{-0} & \phantom{-00}4.000000\phantom{-0}        &        
\phantom{}-152\phantom{-0} & \phantom{-0}3\phantom{-0} & \phantom{-00}9.036669\phantom{-0} \\\hline        
\phantom{0}-39\phantom{-0} & \phantom{-0}2\phantom{-0} & \phantom{-00}4.000000\phantom{-0}        &        
\phantom{}-155\phantom{-0} & \phantom{-0}2\phantom{-0} & \phantom{-00}3.982909\phantom{-0} \\\hline        
\phantom{0}-43\phantom{-0} & \phantom{0}-5\phantom{-0} & \phantom{-0}25.000000\phantom{-0}        &        
\phantom{}-159\phantom{-0} & \phantom{0}-4\phantom{-0} & \phantom{-0}16.059848\phantom{-0} \\\hline        
\phantom{0}-47\phantom{-0} & \phantom{-0}1\phantom{-0} & \phantom{-00}1.000000\phantom{-0}        &        
\phantom{}-164\phantom{-0} & \phantom{-0}2\phantom{-0} & \phantom{-00}3.986694\phantom{-0} \\\hline        
\phantom{0}-68\phantom{-0} & \phantom{0}-1\phantom{-0} & \phantom{-00}1.000011\phantom{-0}        &        
\phantom{}-167\phantom{-0} & \phantom{00}0\phantom{-0} & \phantom{-00}0.018414\phantom{-0} \\\hline        
\phantom{0}-83\phantom{-0} & \phantom{0}-3\phantom{-0} & \phantom{-00}8.999872\phantom{-0}        &        
\phantom{}-168\phantom{-0} & \phantom{0}-2\phantom{-0} & \phantom{-00}4.150487\phantom{-0} \\\hline        
\phantom{0}-84\phantom{-0} & \phantom{0}-6\phantom{-0} & \phantom{-0}36.000088\phantom{-0}        &        
\phantom{}-184\phantom{-0} & \phantom{-0}9\phantom{-0} & \phantom{-0}81.067576\phantom{-0} \\\hline        
\phantom{0}-88\phantom{-0} & \phantom{-0}1\phantom{-0} & \phantom{-00}1.000097\phantom{-0}        &        
\phantom{}-187\phantom{-0} & \phantom{0}-1\phantom{-0} & \phantom{-00}0.910705\phantom{-0} \\\hline        
\phantom{0}-91\phantom{-0} & \phantom{00}0\phantom{-0} & \phantom{-00}0.000490\phantom{-0}        &        
\phantom{}-191\phantom{-0} & \phantom{-0}2\phantom{-0} & \phantom{-00}3.754734\phantom{-0} \\\hline        
\end{tabular}
\end{table}

\begin{table}[p]
\caption{Data for the paramodular form of level 389, based on the Hasse-Weil $L$-series for the curve $y^{2} + x y = -x^{5} - 3x^{4} - 4x^{3} - 3x^{2} - x$. The constant ${C_{389}} = 10.7918126629$.}
\label{tbl:values389}
\begin{tabular}{|c|c|c||c|c|c|}
\hline & & & & & \\[-2ex]
$D$ & $A(D; {F_{389}})$ & $\frac{L({F_{389}},1/2,\chi_D)}{{C_{389}}}\,\abs{D}$
& $D$ & $A(D; {F_{389}})$ & $\frac{L({F_{389}},1/2,\chi_D)}{{C_{389}}}\,\abs{D}$
\\[0.8ex] \hline\hline
\phantom{00}-4\phantom{-0} & \phantom{0}-1\phantom{-0} & \phantom{-00}1.000000\phantom{-0}        &        
\phantom{0}-91\phantom{-0} & \phantom{0}-2\phantom{-0} & \phantom{-00}3.999647\phantom{-0} \\\hline        
\phantom{00}-7\phantom{-0} & \phantom{-0}1\phantom{-0} & \phantom{-00}1.000000\phantom{-0}        &        
\phantom{0}-95\phantom{-0} & \phantom{-0}2\phantom{-0} & \phantom{-00}4.000340\phantom{-0} \\\hline        
\phantom{0}-11\phantom{-0} & \phantom{0}-1\phantom{-0} & \phantom{-00}1.000000\phantom{-0}        &        
\phantom{}-111\phantom{-0} & \phantom{00}0\phantom{-0} & \phantom{-00}0.006107\phantom{-0} \\\hline        
\phantom{0}-19\phantom{-0} & \phantom{0}-3\phantom{-0} & \phantom{-00}9.000000\phantom{-0}        &        
\phantom{}-119\phantom{-0} & \phantom{00}0\phantom{-0} & \phantom{00}-0.010524\phantom{-0} \\\hline        
\phantom{0}-20\phantom{-0} & \phantom{00}0\phantom{-0} & \phantom{-00}0.000000\phantom{-0}        &        
\phantom{}-120\phantom{-0} & \phantom{0}-8\phantom{-0} & \phantom{-0}63.995136\phantom{-0} \\\hline        
\phantom{0}-24\phantom{-0} & \phantom{0}-2\phantom{-0} & \phantom{-00}4.000000\phantom{-0}        &        
\phantom{}-127\phantom{-0} & \phantom{-0}5\phantom{-0} & \phantom{-0}24.993422\phantom{-0} \\\hline        
\phantom{0}-35\phantom{-0} & \phantom{0}-2\phantom{-0} & \phantom{-00}4.000000\phantom{-0}        &        
\phantom{}-143\phantom{-0} & \phantom{00}0\phantom{-0} & \phantom{00}-0.012195\phantom{-0} \\\hline        
\phantom{0}-52\phantom{-0} & \phantom{0}-4\phantom{-0} & \phantom{-0}16.000000\phantom{-0}        &        
\phantom{}-159\phantom{-0} & \phantom{00}0\phantom{-0} & \phantom{00}-0.097331\phantom{-0} \\\hline        
\phantom{0}-55\phantom{-0} & \phantom{-0}2\phantom{-0} & \phantom{-00}4.000000\phantom{-0}        &        
\phantom{}-164\phantom{-0} & \phantom{-0}2\phantom{-0} & \phantom{-00}3.933087\phantom{-0} \\\hline        
\phantom{0}-59\phantom{-0} & \phantom{0}-3\phantom{-0} & \phantom{-00}8.999999\phantom{-0}        &        
\phantom{}-168\phantom{-0} & \phantom{00}0\phantom{-0} & \phantom{-00}0.067253\phantom{-0} \\\hline        
\phantom{0}-67\phantom{-0} & \phantom{0}-2\phantom{-0} & \phantom{-00}4.000022\phantom{-0}        &        
\phantom{}-179\phantom{-0} & \phantom{00}0\phantom{-0} & \phantom{00}-0.062437\phantom{-0} \\\hline        
\phantom{0}-68\phantom{-0} & \phantom{-0}2\phantom{-0} & \phantom{-00}4.000001\phantom{-0}        &        
\phantom{}-183\phantom{-0} & \phantom{-0}4\phantom{-0} & \phantom{-0}16.008922\phantom{-0} \\\hline        
\phantom{0}-79\phantom{-0} & \phantom{-0}5\phantom{-0} & \phantom{-0}25.000105\phantom{-0}        &        
\phantom{}-184\phantom{-0} & \phantom{}-12\phantom{-0} & \phantom{-}144.143109\phantom{-0} \\\hline        
\phantom{0}-87\phantom{-0} & \phantom{-0}2\phantom{-0} & \phantom{-00}4.000124\phantom{-0}        &        
\phantom{}-187\phantom{-0} & \phantom{0}-2\phantom{-0} & \phantom{-00}3.453849\phantom{-0} \\\hline        
\end{tabular}
\end{table}

\begin{table}[p]
\caption{Data for the paramodular form of level 461, based on the Hasse-Weil $L$-series for the curve $y^{2} + y = -2x^{6} + 3x^{5} - 3x^{3} + x$. The constant ${C_{461}} = 12.0599439822$.}
\label{tbl:values461}
\begin{tabular}{|c|c|c||c|c|c|}
\hline & & & & & \\[-2ex]
$D$ & $A(D; {F_{461}})$ & $\frac{L({F_{461}},1/2,\chi_D)}{{C_{461}}}\,\abs{D}$
& $D$ & $A(D; {F_{461}})$ & $\frac{L({F_{461}},1/2,\chi_D)}{{C_{461}}}\,\abs{D}$
\\[0.8ex] \hline\hline
\phantom{00}-4\phantom{-0} & \phantom{-0}1\phantom{-0} & \phantom{-00}1.000000\phantom{-0}        &        
\phantom{}-103\phantom{-0} & \phantom{0}-6\phantom{-0} & \phantom{-0}35.997894\phantom{-0} \\\hline        
\phantom{0}-19\phantom{-0} & \phantom{0}-1\phantom{-0} & \phantom{-00}1.000000\phantom{-0}        &        
\phantom{}-104\phantom{-0} & \phantom{-0}2\phantom{-0} & \phantom{-00}4.000584\phantom{-0} \\\hline        
\phantom{0}-20\phantom{-0} & \phantom{-0}1\phantom{-0} & \phantom{-00}1.000000\phantom{-0}        &        
\phantom{}-107\phantom{-0} & \phantom{00}0\phantom{-0} & \phantom{00}-0.006352\phantom{-0} \\\hline        
\phantom{0}-23\phantom{-0} & \phantom{-0}1\phantom{-0} & \phantom{-00}1.000000\phantom{-0}        &        
\phantom{}-111\phantom{-0} & \phantom{0}-4\phantom{-0} & \phantom{-0}16.005971\phantom{-0} \\\hline        
\phantom{0}-24\phantom{-0} & \phantom{0}-2\phantom{-0} & \phantom{-00}4.000000\phantom{-0}        &        
\phantom{}-115\phantom{-0} & \phantom{-0}7\phantom{-0} & \phantom{-0}48.982236\phantom{-0} \\\hline        
\phantom{0}-39\phantom{-0} & \phantom{-0}2\phantom{-0} & \phantom{-00}4.000000\phantom{-0}        &        
\phantom{}-120\phantom{-0} & \phantom{-0}2\phantom{-0} & \phantom{-00}4.003979\phantom{-0} \\\hline        
\phantom{0}-43\phantom{-0} & \phantom{-0}4\phantom{-0} & \phantom{-0}16.000000\phantom{-0}        &        
\phantom{}-132\phantom{-0} & \phantom{0}-6\phantom{-0} & \phantom{-0}36.033483\phantom{-0} \\\hline        
\phantom{0}-56\phantom{-0} & \phantom{00}0\phantom{-0} & \phantom{-00}0.000001\phantom{-0}        &        
\phantom{}-139\phantom{-0} & \phantom{-}10\phantom{-0} & \phantom{-0}99.938609\phantom{-0} \\\hline        
\phantom{0}-59\phantom{-0} & \phantom{0}-2\phantom{-0} & \phantom{-00}3.999996\phantom{-0}        &        
\phantom{}-143\phantom{-0} & \phantom{00}0\phantom{-0} & \phantom{-00}0.022373\phantom{-0} \\\hline        
\phantom{0}-67\phantom{-0} & \phantom{-0}2\phantom{-0} & \phantom{-00}4.000039\phantom{-0}        &        
\phantom{}-151\phantom{-0} & \phantom{0}-7\phantom{-0} & \phantom{-0}48.911564\phantom{-0} \\\hline        
\phantom{0}-68\phantom{-0} & \phantom{0}-1\phantom{-0} & \phantom{-00}0.999934\phantom{-0}        &        
\phantom{}-163\phantom{-0} & \phantom{0}-3\phantom{-0} & \phantom{-00}9.016937\phantom{-0} \\\hline        
\phantom{0}-84\phantom{-0} & \phantom{-0}6\phantom{-0} & \phantom{-0}36.000642\phantom{-0}        &        
\phantom{}-164\phantom{-0} & \phantom{0}-4\phantom{-0} & \phantom{-0}15.940089\phantom{-0} \\\hline        
\phantom{0}-87\phantom{-0} & \phantom{0}-2\phantom{-0} & \phantom{-00}4.000088\phantom{-0}        &        
\phantom{}-167\phantom{-0} & \phantom{-0}1\phantom{-0} & \phantom{-00}0.940123\phantom{-0} \\\hline        
\phantom{0}-88\phantom{-0} & \phantom{0}-2\phantom{-0} & \phantom{-00}4.002180\phantom{-0}        &        
\phantom{}-191\phantom{-0} & \phantom{0}-1\phantom{-0} & \phantom{-00}1.102735\phantom{-0} \\\hline        
\phantom{0}-91\phantom{-0} & \phantom{0}-4\phantom{-0} & \phantom{-0}15.999578\phantom{-0}        &        
\phantom{}-195\phantom{-0} & \phantom{0}-2\phantom{-0} & \phantom{-00}3.513855\phantom{-0} \\\hline        
\phantom{0}-95\phantom{-0} & \phantom{-0}3\phantom{-0} & \phantom{-00}9.000945\phantom{-0}        &        
\phantom{}-199\phantom{-0} & \phantom{-0}5\phantom{-0} & \phantom{-0}24.577953\phantom{-0} \\\hline        
\end{tabular}
\end{table}

\begin{table}[p]
\caption{Data for the paramodular form of level 523, based on the Hasse-Weil $L$-series for the curve $y^{2} + x y = -x^{5} + 4x^{4} - 5x^{3} + x^{2} + x$. The constant ${C_{523}} = 6.8275178004$.}
\label{tbl:values523}
\begin{tabular}{|c|c|c||c|c|c|}
\hline & & & & & \\[-2ex]
$D$ & $A(D; {F_{523}})$ & $\frac{L({F_{523}},1/2,\chi_D)}{{C_{523}}}\,\abs{D}$
& $D$ & $A(D; {F_{523}})$ & $\frac{L({F_{523}},1/2,\chi_D)}{{C_{523}}}\,\abs{D}$
\\[0.8ex] \hline\hline
\phantom{00}-3\phantom{-0} & \phantom{0}-1\phantom{-0} & \phantom{-00}1.000000\phantom{-0}        &        
\phantom{}-103\phantom{-0} & \phantom{0}-5\phantom{-0} & \phantom{-0}24.997989\phantom{-0} \\\hline        
\phantom{00}-8\phantom{-0} & \phantom{0}-2\phantom{-0} & \phantom{-00}4.000000\phantom{-0}        &        
\phantom{}-104\phantom{-0} & \phantom{0}-4\phantom{-0} & \phantom{-0}16.010823\phantom{-0} \\\hline        
\phantom{0}-20\phantom{-0} & \phantom{-0}2\phantom{-0} & \phantom{-00}4.000000\phantom{-0}        &        
\phantom{}-115\phantom{-0} & \phantom{-0}6\phantom{-0} & \phantom{-0}35.990265\phantom{-0} \\\hline        
\phantom{0}-35\phantom{-0} & \phantom{0}-4\phantom{-0} & \phantom{-0}16.000000\phantom{-0}        &        
\phantom{}-120\phantom{-0} & \phantom{0}-4\phantom{-0} & \phantom{-0}16.007819\phantom{-0} \\\hline        
\phantom{0}-39\phantom{-0} & \phantom{-0}2\phantom{-0} & \phantom{-00}4.000000\phantom{-0}        &        
\phantom{}-123\phantom{-0} & \phantom{0}-2\phantom{-0} & \phantom{-00}4.017347\phantom{-0} \\\hline        
\phantom{0}-47\phantom{-0} & \phantom{0}-5\phantom{-0} & \phantom{-0}25.000000\phantom{-0}        &        
\phantom{}-127\phantom{-0} & \phantom{00}0\phantom{-0} & \phantom{-00}0.041002\phantom{-0} \\\hline        
\phantom{0}-51\phantom{-0} & \phantom{00}0\phantom{-0} & \phantom{00}-0.000000\phantom{-0}        &        
\phantom{}-132\phantom{-0} & \phantom{0}-4\phantom{-0} & \phantom{-0}16.040526\phantom{-0} \\\hline        
\phantom{0}-55\phantom{-0} & \phantom{0}-4\phantom{-0} & \phantom{-0}15.999997\phantom{-0}        &        
\phantom{}-136\phantom{-0} & \phantom{-0}4\phantom{-0} & \phantom{-0}15.950713\phantom{-0} \\\hline        
\phantom{0}-56\phantom{-0} & \phantom{00}0\phantom{-0} & \phantom{00}-0.000003\phantom{-0}        &        
\phantom{}-139\phantom{-0} & \phantom{0}-8\phantom{-0} & \phantom{-0}63.909162\phantom{-0} \\\hline        
\phantom{0}-59\phantom{-0} & \phantom{0}-2\phantom{-0} & \phantom{-00}4.000011\phantom{-0}        &        
\phantom{}-148\phantom{-0} & \phantom{00}0\phantom{-0} & \phantom{-00}0.064277\phantom{-0} \\\hline        
\phantom{0}-67\phantom{-0} & \phantom{-0}5\phantom{-0} & \phantom{-0}24.999910\phantom{-0}        &        
\phantom{}-152\phantom{-0} & \phantom{0}-8\phantom{-0} & \phantom{-0}64.090096\phantom{-0} \\\hline        
\phantom{0}-79\phantom{-0} & \phantom{00}0\phantom{-0} & \phantom{00}-0.001005\phantom{-0}        &        
\phantom{}-155\phantom{-0} & \phantom{00}0\phantom{-0} & \phantom{-00}0.137795\phantom{-0} \\\hline        
\phantom{0}-83\phantom{-0} & \phantom{-0}3\phantom{-0} & \phantom{-00}8.999507\phantom{-0}        &        
\phantom{}-159\phantom{-0} & \phantom{00}0\phantom{-0} & \phantom{-00}0.313322\phantom{-0} \\\hline        
\phantom{0}-84\phantom{-0} & \phantom{-0}4\phantom{-0} & \phantom{-0}15.999928\phantom{-0}        &        
\phantom{}-163\phantom{-0} & \phantom{-0}9\phantom{-0} & \phantom{-0}80.912682\phantom{-0} \\\hline        
\phantom{0}-87\phantom{-0} & \phantom{0}-8\phantom{-0} & \phantom{-0}64.001544\phantom{-0}        &        
\phantom{}-167\phantom{-0} & \phantom{0}-2\phantom{-0} & \phantom{-00}4.081331\phantom{-0} \\\hline        
\phantom{0}-88\phantom{-0} & \phantom{-0}4\phantom{-0} & \phantom{-0}16.000150\phantom{-0}        &        
\phantom{}-184\phantom{-0} & \phantom{-0}4\phantom{-0} & \phantom{-0}16.041808\phantom{-0} \\\hline        
\phantom{0}-95\phantom{-0} & \phantom{00}0\phantom{-0} & \phantom{-00}0.000162\phantom{-0}        &        
\phantom{}-199\phantom{-0} & unknown & \phantom{-00}2.233478\phantom{-0} \\\hline        
\end{tabular}
\end{table}

\begin{table}[p]
\caption{Data for the paramodular form of level 587 (in the plus space), based on the Hasse-Weil $L$-series for the curve $y^{2} = -3x^{6} + 18x^{4} + 6x^{3} + 9x^{2} - 54x + 57$. The constant ${C_{587}^+} = 15.8250549126$.}
\label{tbl:values587p}
\begin{tabular}{|c|c|c||c|c|c|}
\hline & & & & & \\[-2ex]
$D$ & $A(D; {F_{587}^+})$ & $\frac{L({F_{587}^+},1/2,\chi_D)}{{C_{587}^+}}\,\abs{D}$
& $D$ & $A(D; {F_{587}^+})$ & $\frac{L({F_{587}^+},1/2,\chi_D)}{{C_{587}^+}}\,\abs{D}$
\\[0.8ex] \hline\hline
\phantom{00}-8\phantom{-0} & \phantom{-0}1\phantom{-0} & \phantom{-00}1.000000\phantom{-0}        &        
\phantom{}-107\phantom{-0} & \phantom{-0}3\phantom{-0} & \phantom{-00}8.992246\phantom{-0} \\\hline        
\phantom{0}-11\phantom{-0} & \phantom{-0}1\phantom{-0} & \phantom{-00}1.000000\phantom{-0}        &        
\phantom{}-111\phantom{-0} & \phantom{00}0\phantom{-0} & \phantom{-00}0.001034\phantom{-0} \\\hline        
\phantom{0}-15\phantom{-0} & \phantom{0}-1\phantom{-0} & \phantom{-00}1.000000\phantom{-0}        &        
\phantom{}-123\phantom{-0} & \phantom{-0}2\phantom{-0} & \phantom{-00}3.967416\phantom{-0} \\\hline        
\phantom{0}-19\phantom{-0} & \phantom{-0}1\phantom{-0} & \phantom{-00}1.000000\phantom{-0}        &        
\phantom{}-127\phantom{-0} & \phantom{0}-1\phantom{-0} & \phantom{-00}1.021384\phantom{-0} \\\hline        
\phantom{0}-20\phantom{-0} & \phantom{-0}1\phantom{-0} & \phantom{-00}1.000000\phantom{-0}        &        
\phantom{}-131\phantom{-0} & \phantom{-0}3\phantom{-0} & \phantom{-00}8.952729\phantom{-0} \\\hline        
\phantom{0}-23\phantom{-0} & \phantom{-0}1\phantom{-0} & \phantom{-00}1.000000\phantom{-0}        &        
\phantom{}-132\phantom{-0} & \phantom{-0}5\phantom{-0} & \phantom{-0}24.978160\phantom{-0} \\\hline        
\phantom{0}-24\phantom{-0} & \phantom{-0}1\phantom{-0} & \phantom{-00}1.000000\phantom{-0}        &        
\phantom{}-136\phantom{-0} & \phantom{0}-2\phantom{-0} & \phantom{-00}4.004657\phantom{-0} \\\hline        
\phantom{0}-35\phantom{-0} & \phantom{-0}1\phantom{-0} & \phantom{-00}1.000000\phantom{-0}        &        
\phantom{}-139\phantom{-0} & \phantom{-0}1\phantom{-0} & \phantom{-00}0.975103\phantom{-0} \\\hline        
\phantom{0}-39\phantom{-0} & \phantom{0}-1\phantom{-0} & \phantom{-00}1.000000\phantom{-0}        &        
\phantom{}-148\phantom{-0} & \phantom{0}-8\phantom{-0} & \phantom{-0}63.992830\phantom{-0} \\\hline        
\phantom{0}-52\phantom{-0} & \phantom{-0}1\phantom{-0} & \phantom{-00}1.000000\phantom{-0}        &        
\phantom{}-155\phantom{-0} & \phantom{0}-8\phantom{-0} & \phantom{-0}64.026128\phantom{-0} \\\hline        
\phantom{0}-56\phantom{-0} & \phantom{-0}3\phantom{-0} & \phantom{-00}9.000000\phantom{-0}        &        
\phantom{}-164\phantom{-0} & \phantom{-0}4\phantom{-0} & \phantom{-0}15.945677\phantom{-0} \\\hline        
\phantom{0}-71\phantom{-0} & \phantom{-0}2\phantom{-0} & \phantom{-00}4.000060\phantom{-0}        &        
\phantom{}-168\phantom{-0} & \phantom{0}-1\phantom{-0} & \phantom{-00}1.251997\phantom{-0} \\\hline        
\phantom{0}-91\phantom{-0} & \phantom{0}-5\phantom{-0} & \phantom{-0}25.000542\phantom{-0}        &        
\phantom{}-183\phantom{-0} & \phantom{0}-5\phantom{-0} & \phantom{-0}25.081099\phantom{-0} \\\hline        
\phantom{}-103\phantom{-0} & \phantom{0}-3\phantom{-0} & \phantom{-00}8.997703\phantom{-0}        &        
\phantom{}-187\phantom{-0} & \phantom{0}-2\phantom{-0} & \phantom{-00}4.172874\phantom{-0} \\\hline        
\end{tabular}
\end{table}

\begin{table}[p]
\caption{Data for the paramodular form of level 587 (in the minus space), based on the Hasse-Weil $L$-series for the curve $y^{2} + \left(x^{3} + x + 1\right) y = -x^{3} - x^{2}$. The constant ${C_{587}^-} = 12.6406580054$.}
\label{tbl:values587m}
\begin{tabular}{|c|c||c|c|}
\hline & & & \\[-2ex]
$D$ & $\frac{L({F_{587}^-},1/2,\chi_D)}{{C_{587}^-}}\,\lvert{D}\rvert$
& $D$ & $\frac{L({F_{587}^-},1/2,\chi_D)}{{C_{587}^-}}\,\lvert{D}\rvert$
\\[0.8ex] \hline\hline
\phantom{00}-3\phantom{-0} & \phantom{-00}1.000000\phantom{-0}        &        
\phantom{0}-95\phantom{-0} & \phantom{-00}0.988953\phantom{-0} \\\hline        
\phantom{00}-4\phantom{-0} & \phantom{-00}1.000000\phantom{-0}        &        
\phantom{}-104\phantom{-0} & \phantom{-00}0.998569\phantom{-0} \\\hline        
\phantom{00}-7\phantom{-0} & \phantom{-00}1.000000\phantom{-0}        &        
\phantom{}-115\phantom{-0} & \phantom{-}528.955763\phantom{-0} \\\hline        
\phantom{0}-31\phantom{-0} & \phantom{-00}4.000000\phantom{-0}        &        
\phantom{}-116\phantom{-0} & \phantom{-00}0.987458\phantom{-0} \\\hline        
\phantom{0}-40\phantom{-0} & \phantom{-00}9.000000\phantom{-0}        &        
\phantom{}-119\phantom{-0} & \phantom{-00}0.022887\phantom{-0} \\\hline        
\phantom{0}-43\phantom{-0} & \phantom{-}144.000000\phantom{-0}        &        
\phantom{}-120\phantom{-0} & \phantom{-0}25.008747\phantom{-0} \\\hline        
\phantom{0}-47\phantom{-0} & \phantom{-00}1.000001\phantom{-0}        &        
\phantom{}-143\phantom{-0} & \phantom{-00}0.856318\phantom{-0} \\\hline        
\phantom{0}-51\phantom{-0} & \phantom{-00}4.000000\phantom{-0}        &        
\phantom{}-151\phantom{-0} & \phantom{-00}0.151989\phantom{-0} \\\hline        
\phantom{0}-55\phantom{-0} & \phantom{-00}9.000004\phantom{-0}        &        
\phantom{}-152\phantom{-0} & \phantom{-00}0.975679\phantom{-0} \\\hline        
\phantom{0}-59\phantom{-0} & \phantom{-0}16.000007\phantom{-0}        &        
\phantom{}-159\phantom{-0} & \phantom{-00}1.086667\phantom{-0} \\\hline        
\phantom{0}-67\phantom{-0} & \phantom{-}143.999732\phantom{-0}        &        
\phantom{}-163\phantom{-0} & \phantom{-}898.880486\phantom{-0} \\\hline        
\phantom{0}-68\phantom{-0} & \phantom{-00}4.000069\phantom{-0}        &        
\phantom{}-167\phantom{-0} & \phantom{-00}4.308248\phantom{-0} \\\hline        
\phantom{0}-79\phantom{-0} & \phantom{-00}0.000216\phantom{-0}        &        
\phantom{}-179\phantom{-0} & \phantom{-0}10.246464\phantom{-0} \\\hline        
\phantom{0}-83\phantom{-0} & \phantom{-00}4.000474\phantom{-0}        &        
\phantom{}-184\phantom{-0} & \phantom{-0}25.006321\phantom{-0} \\\hline        
\phantom{0}-84\phantom{-0} & \phantom{-00}0.999879\phantom{-0}        &        
\phantom{}-191\phantom{-0} & \phantom{-00}0.537613\phantom{-0} \\\hline        
\phantom{0}-87\phantom{-0} & \phantom{-00}9.000104\phantom{-0}        &        
\phantom{}-195\phantom{-0} & \phantom{-00}8.602606\phantom{-0} \\\hline        
\phantom{0}-88\phantom{-0} & \phantom{-00}0.999560\phantom{-0}        &        
\phantom{}-199\phantom{-0} & \phantom{-00}0.158427\phantom{-0} \\\hline        
\end{tabular}
\end{table}


\bibliography{bocherer}
\bibliographystyle{plain}

\end{document}